\definecolor{ForestGreen}{rgb}{0.13,.57,.13}
\newcommand{\id}{\mathrm{id}}
\newcommand{\soc}{\mathop{\mathrm{soc}}}
\newcommand{\End}{\mathrm{End}}
\newcommand{\inv}{^{-1}}
\newcommand{\Hom}{\mathrm{Hom}}
\newcommand{\Ext}{\mathrm{Ext}}
\newcommand{\Z}{\mathbb{Z}}
\newcommand{\F}{\mathbb F}
\newcommand{\la}{\longrightarrow}
\theoremstyle{definition}
\newtheorem*{conv*}{Conventions}
\newtheorem*{ack}{Acknowledgements}
\newtheorem*{hyp*}{Hypothesis}
\newtheorem{example}[equation]{Example}
\newtheorem*{conventions*}{Conventions}
\theoremstyle{plain}
\newtheorem{thm}[equation]{Theorem}
\newtheorem{lemma}[equation]{Lemma}
\newtheorem{thm-defi}[equation]{Theorem-Definition}
\newtheorem*{thm*}{Theorem}
\newtheorem*{lemma*}{Lemma}
\newtheorem*{cor*}{Corollary}
\theoremstyle{remark}
\newtheorem*{remark*}{Remark}
\begin{document}


\title{Mackey algebras which are Gorenstein}
\author{Ivo Dell'Ambrogio}
\author{Jan \v S\v tov\'i\v cek}

\subjclass[2010]{
16E65  
(Primary)
20C10 
(Secondary)%
}
\keywords{Mackey functors, Gorenstein rings, Burnside rings.}

\address{Ivo Dell'Ambrogio, Laboratoire de Math\'ematiques Paul Painlev\'e, Universit\'e de Lille~1, Cit\'e Scientifique -- B\^at.\,M2, 59665 Villeneuve-d'Ascq Cedex, France}
\email{ivo.dellambrogio@math.univ-lille1.fr}

\address{Jan \v S\v tov\'i\v cek, Department of Algebra, Charles University in Prague, Sokolovsk\'a~83, 186 75 Praha~8, Czech Republic}
\email{stovicek@karlin.mff.cuni.cz}

 \date{\today}

\begin{abstract}
We complete the picture available in the literature by showing that the integral Mackey algebra is Gorenstein if and only if the group order is square-free, in which case it must have Gorenstein dimension one. We illustrate this result by looking in details at the examples of the cyclic group of order four and the Klein four group.
\end{abstract}

\thanks{First-named author partially supported by the Labex CEMPI (ANR-11-LABX-0007-01)}
\thanks{Second-named author supported by grant GA\v{C}R P201/12/G028 from the Czech Science Foundation.}

\maketitle




Let $G$ be a finite group and $R$ a commutative ring of coefficients. 
The \emph{Mackey algebra} $\mu_R(G)$, introduced in~\cite{ThevenazWebb95}, is a finite-rank free $R$-algebra whose representations form precisely the category of Mackey functors for $G$ with $R$-coefficients. It can be defined as the endomorphism algebra of $\coprod_{H\leq G}G/H$ in the category of finite $G$-sets and $R$-linear combinations of spans of $G$-maps.
 In this note we will say that a (unital, associative, non-necessarily commutative) ring $S$ is \emph{Gorenstein}\footnote{If the base~$R$ is noetherian, then either the Mackey algebra $\mu_R(G)$ or the Burnside ring $RB(G)$ is Gorenstein (in our sense) iff it is \emph{Iwanaga-Gorenstein}, i.e.\ noetherian and of finite injective dimension on \emph{both} sides. Indeed, both are self-dual finite $R$-algebras; cf.\ \cite[Ex.\,2.2]{DSS17}.} if its injective dimension over itself as a left module is finite. The latter number $n$ is the \emph{Gorenstein dimension} of~$S$, and we will sometimes say for short that $S$ is \emph{$n$-Gorenstein}. Famously, Gorenstein rings are ubiquitous in mathematics: see \cite{Bass63} for the classical case of local commutative rings, and e.g.\ \cite{DSS17} for a wide-ranging look at non-commutative rings and small categories.

The goal of this note is to clarify the literature on the following point:

\begin{thm} \label{thm:characterisation_of_Gorenstein}
The integral Mackey algebra $\mu_\Z(G)$ is Gorenstein if and only if the order of $G$ is square-free, in which case it has Gorenstein dimension one.
\end{thm}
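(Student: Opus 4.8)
The plan is to localise at each prime $p$, reduce modulo $p$, and then read the answer off the structure theory of Mackey functors of Th\'evenaz--Webb. Since $\mu_\Z(G)$ is $\Z$-free of finite rank it is noetherian, and reversal of spans is an anti-automorphism of it, so its injective dimension as a left module over itself equals the one on the right. As injective dimension is detected by the vanishing of $\Ext$-groups against finitely generated modules, and such groups commute with the flat base changes $\Z\to\Z_{(p)}$ and $\Z\to\Q$, the algebra $\mu_\Z(G)$ is Gorenstein \iff $\mu_{\Z_{(p)}}(G):=\mu_\Z(G)\otimes_\Z\Z_{(p)}$ is Gorenstein for every prime $p$, and then $\mathrm{Gdim}\,\mu_\Z(G)=\sup_p\mathrm{Gdim}\,\mu_{\Z_{(p)}}(G)$; here $\mu_\Q(G)$ is semisimple by Maschke (as $|G|$ is invertible), hence $0$-Gorenstein and irrelevant, while for $G$ trivial the answer is already $\idim\,\Z=1$. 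Fix $p$ and set $\Lambda=\mu_{\Z_{(p)}}(G)$, $\bar\Lambda=\Lambda/p\Lambda=\mu_{\mathbb F_p}(G)$.

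Two change-of-rings computations along the non-zero-divisor $p$ now reduce everything to $\bar\Lambda$. First, the standard change-of-rings isomorphism for the regular quotient $\Lambda\twoheadrightarrow\bar\Lambda$ gives $\Ext^i_\Lambda(\bar M,\Lambda)\cong\Ext^{i-1}_{\bar\Lambda}(\bar M,\bar\Lambda)$ for every $\bar\Lambda$-module $\bar M$ and $i\ge1$; in particular $\idim_\Lambda\Lambda\ge\idim_{\bar\Lambda}\bar\Lambda+1$, so $\Lambda$ is not Gorenstein as soon as $\bar\Lambda$ has infinite self-injective dimension. Second, for any $\Lambda$-lattice $M$ (finitely generated and $\Z_{(p)}$-free) one has $\Ext^i_\Lambda(M,\bar\Lambda)\cong\Ext^i_{\bar\Lambda}(M/pM,\bar\Lambda)$, because $M\otimes^{\mathbf L}_\Lambda\bar\Lambda\simeq M/pM$; hence if $\bar\Lambda$ is self-injective, a Nakayama argument in the long exact $\Ext$-sequence of $0\to\Lambda\xrightarrow{p}\Lambda\to\bar\Lambda\to0$ forces $\Ext^{\ge1}_\Lambda(M,\Lambda)=0$ for every lattice $M$, and then, invoking the first isomorphism on torsion test modules together with the torsion/torsion-free filtration, $\idim_\Lambda\Lambda\le1$ — indeed $=1$, since a $\Z_{(p)}$-order is never self-injective. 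It therefore suffices to prove that the finite-dimensional algebra $\mu_{\mathbb F_p}(G)$ is self-injective when a Sylow $p$-subgroup of $G$ has order at most $p$, and has infinite self-injective dimension when $p^2\mid|G|$.

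If $p\nmid|G|$ then $\mu_{\mathbb F_p}(G)$ is semisimple (Maschke again). If a Sylow $p$-subgroup $P$ is cyclic of order $p$, then the only $p$-subgroups of $G$ up to conjugacy are the trivial one and the $P$'s, and of the two coefficient algebras $\mathbb F_p[G]$ and $\mathbb F_p[N_G(P)/P]$ controlling Th\'evenaz--Webb's indecomposable projective Mackey functors the second is semisimple (as $p\nmid[N_G(P):P]$); this keeps the projectives small enough that each is seen to have simple socle, matched with its top through the span-reversal anti-automorphism, so $\mu_{\mathbb F_p}(G)$ is self-injective. If instead $p^2\mid|G|$, then $G$ contains a chain $1<R<S$ of $p$-subgroups with $|R|=[S:R]=p$, and a direct analysis of projective resolutions — or a reduction to the groups of order $p^2$, treated explicitly for $p=2$ in the examples section — produces a simple $\mu_{\mathbb F_p}(G)$-module $T$ with $\Ext^i_{\mu_{\mathbb F_p}(G)}(T,\mu_{\mathbb F_p}(G))\ne0$ for infinitely many $i$, i.e.\ $\idim_{\mu_{\mathbb F_p}(G)}\mu_{\mathbb F_p}(G)=\infty$.

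The main obstacle is this last dichotomy. Both halves require a concrete grip on the projective Mackey functors; the ``$p^2\mid|G|$'' half is the more delicate one, since one must exhibit the infinite (in the worked examples, eventually $2$-periodic) homological behaviour for the groups of order $p^2$ and then reduce an arbitrary $G$ with $p^2\mid|G|$ to those minimal cases. Everything else is formal homological algebra over $\Z_{(p)}$.
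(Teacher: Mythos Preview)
Your reduction strategy --- localise at~$p$, pass to $\mu_{\F_p}(G)$ via the central regular element~$p$, and decide everything there --- is sound, and the change-of-rings arguments you sketch (Rees's lemma and the Nakayama lift from self-injectivity of $\bar\Lambda$ back to $\idim_\Lambda\Lambda\le 1$) are correct. For the ``if'' direction this is a genuinely different route from the paper's: the paper never localises, but exhibits $\mu_\Z(G)$ directly as a symmetric $\Z$-algebra whenever $|G|$ is square-free, by combining Gustafson's bilinear form on Burnside rings with Rognerud's transfer of such forms to the Mackey algebra, and then applies a general ``symmetric over $n$-Gorenstein is $n$-Gorenstein'' criterion. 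Your approach trades that global input for the self-injectivity of $\mu_{\F_p}(G)$ when the $p$-Sylow has order $\le p$, which is exactly Th\'evenaz--Webb's Theorem~19.2 and should simply be cited rather than re-sketched. The paper's route has the bonus of working over any Gorenstein base~$R$ (yielding Theorem~\ref{thm:Gorenstein_general_R}); yours stays closer to first principles over~$\Z$.

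The ``only if'' direction has the gap you yourself flag as the main obstacle. After the reduction you need $\mu_{\F_p}(G)$ to have \emph{infinite} self-injective dimension whenever $p^2\mid|G|$; Th\'evenaz--Webb~19.2 only gives non-self-injectivity, and for finite-dimensional algebras that is strictly weaker. Your reduction to a subgroup $Q\le G$ of order~$p^2$ can in fact be made to work (induction and restriction of Mackey functors are exact biadjoint functors sending projectives to projectives, and $\mathrm{Res}^G_Q$ applied to the regular representable hits every indecomposable projective over $\mu_{\F_p}(Q)$), but this only displaces the problem: you still need $\idim\mu_{\F_p}(Q)=\infty$ for $Q\in\{C_{p^2},\,C_p\times C_p\}$ and \emph{all} primes~$p$, which is more than the non-self-injectivity the paper's explicit examples establish for $p=2$. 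The paper avoids all of this in one stroke by citing Bouc--Stancu--Webb: the finitistic dimension of $\mu_k(G)$ over any field~$k$ is zero, hence for these algebras ``Gorenstein'' and ``self-injective'' coincide, and Th\'evenaz--Webb~19.2 finishes immediately. Without that input or a genuine substitute, your argument does not conclude.
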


Note that this contradicts \cite[Lemma~2.2]{Greenlees92}, which states that projective Mackey functors over \emph{any} finite group have injective dimension one. The lemma in question was used to prove that the the finitistic dimension of $\mu_\Z(G)$ equals one, and a different proof of the latter theorem was provided (together with several other similar results) by Bouc, Stancu and Webb; see \cite[Theorem~1.2]{BSW17}. Still, the question whether $\mu_\Z(G)$ is always Gorenstein was left open. As it turns out, it is Gorenstein precisely for groups $G$ with square-free order. These are also the only groups for which $\mu_\Z(G)$ can be made into a symmetric $\Z$-algebra, by a theorem of Rognerud \cite{Rognerud15} (we will use this result, so more on this below); and they are also precisely the groups for which the Burnside ring $B(G)$ is (1-)Gorenstein or even a symmetric $\Z$-algebra, by theorems of Kr\"amer \cite{Kraemer74} and Gustafson \cite{Gustafson77}.

For one direction of our theorem we actually know somewhat more:

\begin{thm} \label{thm:Gorenstein_general_R}
The Mackey algebra $\mu_R(G)$ is Gorenstein of dimension~$n$, provided the order of $G$ is square-free and $R$ is Gorenstein of dimension~$n$.
\end{thm}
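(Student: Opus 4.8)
The plan is to reduce the statement over a general Gorenstein base $R$ to the already-known case $R=\Z$ (Theorem~\ref{thm:characterisation_of_Gorenstein}) by a base-change argument, exploiting the fact that $\mu_R(G)$ is obtained from $\mu_\Z(G)$ by extension of scalars, $\mu_R(G)\cong R\otimes_\Z\mu_\Z(G)$, and that $\mu_\Z(G)$ is free of finite rank as a $\Z$-module. First I would record the structural input we are allowed to cite: when $|G|$ is square-free, $\mu_\Z(G)$ is a \emph{symmetric} $\Z$-algebra by Rognerud's theorem, hence in particular $\mu_\Z(G)\cong\Hom_\Z(\mu_\Z(G),\Z)$ as $\mu_\Z(G)$-bimodules, and it is $1$-Gorenstein. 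Tensoring the symmetry isomorphism with $R$ and using that $\mu_\Z(G)$ is $\Z$-free of finite rank gives $\mu_R(G)\cong\Hom_\Z(\mu_\Z(G),\Z)\otimes_\Z R\cong\Hom_R(\mu_R(G),R)$ as $\mu_R(G)$-bimodules; that is, $\mu_R(G)$ is a symmetric $R$-algebra. This is the key reduction, because for a symmetric $R$-algebra $A$ that is finitely generated projective (here even free) over $R$, the functor $\Hom_R(-,R)$ exchanges projective and injective $A$-modules, so the injective dimension of $A$ as a left $A$-module equals the projective dimension of $A$ as a right $A$-module, and by symmetry also as a left module — call this common number $\Gpdim$-type invariant and relate it to $R$.

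Next I would compute that projective/injective dimension. The point is that over a symmetric $R$-algebra $A$ which is $R$-free of finite rank, one has $\idim_A A = \pdim_A A$, and I claim this equals $\idim_R R = n$. For the inequality $\le n$: take an $R$-free (hence $A$-free after applying $A\otimes_R-$, but we need an $A$-resolution) — more carefully, I would argue that since $A$ is $R$-projective, an injective $R$-resolution $R\to I^\bullet$ of length $n$ induces, via $\Hom_R(A,-)$, an injective $A$-resolution $\Hom_R(A,R)=A\to\Hom_R(A,I^\bullet)$ of length $n$, each $\Hom_R(A,I^j)$ being injective as a left $A$-module because $I^j$ is injective over $R$ and $A$ is $R$-projective (adjunction $\Hom_A(-,\Hom_R(A,I^j))\cong\Hom_R(-,I^j)$ is exact). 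Hence $\idim_A A\le n$. For $\ge n$: restricting an injective $A$-resolution of $A$ to $R$ and using that $A$ is $R$-free, a bounded injective $A$-resolution of length $m$ restricts to a bounded $R$-resolution of $A$ by $R$-injectives (again because $A$ is $R$-projective, an injective $A$-module is $R$-injective via the same adjunction), and $A\cong R^{\oplus r}$ as an $R$-module with $r\ge 1$, so $n=\idim_R R\le m$. Combining, $\idim_A A = n$, which is exactly the claim with $A=\mu_R(G)$.

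The step I expect to be the main obstacle is making the symmetry base-change rigorous at the level of \emph{bimodules} rather than just one-sided modules: one must check that the isomorphism $\mu_R(G)\cong\Hom_R(\mu_R(G),R)$ obtained by tensoring Rognerud's isomorphism with $R$ genuinely respects both the left and the right $\mu_R(G)$-actions, and that the associated symmetrizing $R$-bilinear form is still non-degenerate and symmetric — this is where finiteness and $\Z$-freeness of $\mu_\Z(G)$ are used, via the natural isomorphism $\Hom_\Z(M,\Z)\otimes_\Z R\cong\Hom_R(M\otimes_\Z R,R)$ for $M$ finitely generated free over $\Z$. A secondary point requiring care is that ``Gorenstein of dimension $n$'' for $R$ should be read as $\idim_R R = n$ with $R$ noetherian (so that $\mu_R(G)$, being module-finite over $R$, is itself noetherian on both sides); granting that, the two inequalities above are the routine homological algebra of change of rings for a module-finite projective algebra, and no group-theoretic input beyond Rognerud's theorem and the square-free hypothesis is needed.
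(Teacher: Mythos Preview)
Your proposal is correct and follows essentially the same route as the paper: both arguments establish that $\mu_R(G)$ is a symmetric $R$-algebra (finitely generated free over~$R$) when $|G|$ is square-free, and then deduce that such an algebra inherits the Gorenstein dimension of~$R$. The only cosmetic differences are that the paper constructs the symmetrizing form on $\mu_R(G)$ directly over~$R$ (via Gustafson's forms on the Burnside rings $RB(H)$ and Rognerud's compatibility criterion) rather than by base-changing the integral form, and that the paper packages your adjunction computations for $\idim_A A=\idim_R R$ into a citation of a general ``relative Serre functor'' criterion; your hands-on verification of both inequalities via $\Hom_R(A,-)$ and restriction is exactly the content of that criterion in this special case.
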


Thus for instance if $R$ is a Dedekind domain $\mu_R(G)$ must be 1-Gorenstein.

\begin{center}$***$\end{center}

Our proofs essentially consist of a standard argument for reducing modulo a regular element, Lemma~\ref{lemma:reduction} below, and of combining a few deeper known results.

\begin{proof}[Proof of Theorem~\ref{thm:Gorenstein_general_R}]
The main point of the proof was discovered by Gustafson \cite{Gustafson77}: when $|G|$ is square-free one can define a nice bilinear form $\beta_G$ on the integral Burnside ring $B(G):= K_0(G\textsf{-set}, \amalg, \times)$. More precisely: square-free implies solvable (\cite[Theorem 9.4.3]{Hall59} or \cite[Lemma 5.4]{Rognerud15}), which implies that there is precisely one conjugacy class of subgoups of $G$ of any given order $d$ dividing~$|G|$. Then the bilinear form $\beta_G\colon B(G)\times B(G)\to \Z$ is defined by sending $(x,y)$ to $[G/1]^*(x\cdot y)$, where $[G/1]^*\colon B(G)\to \Z$ is the form sending a basis element $[G/H]\in B(G)$ to $1$ if $H=1$ and to $0$ otherwise. The form is symmetric ($\beta_G(x,y)=\beta_G(y,x)$), non-degenerate (in that it induces and isomorphism $B(G)\stackrel{\sim}{\to}B(G)^*$ onto the $\Z$-dual) and associative ($\beta_G(x,y\cdot z)=\beta_G(x\cdot y,z)$).
By tensoring with~$R$, we obtain such a form over $RB(G)=R\otimes_\mathbb ZB(G)$ for any ring~$R$.

The second ingredient is due to Rognerud~\cite{Rognerud15}, who showed that to give an associative symmetric bilinear form $\beta$ over the Mackey algebra $\mu_R(G)$ amounts to giving a family $\{\beta_H\colon RB(H)\times RB(H)\to R\}_{H\leq G}$ of associative symmetric $R$-bilinear forms indexed by the subgroups of~$G$ and satisfying $\beta_G(1_{RB(G)}, \mathrm{ind}_H^G(x))=\beta_H(1_{RB(H)}, x)$. Since the Gustafson forms satisfy this compatibility, they yield a bilinear form on $\mu_R(G)$ which again is symmetric, non-degenerate and associative. (See \cite[\S3]{Rognerud15} for more details).

We may now immediately conclude with the third and final ingredient, namely the following criterion for the Gorenstein property.
\end{proof}

\begin{lemma} \label{lemma:symm_Gorenstein}
Let $R$ be an $n$-Gorenstein commutative ring, and let $A$ be an $R$-algebra which is finitely generated projective as an $R$-module. If $A$ admits a symmetric, non-degenerate and associative bilinear form $\beta\colon A\times A\to R$ and if the unit map $R\to A$ admits an $R$-linear retraction, then $A$ is also an $n$-Gorenstein ring.
\end{lemma}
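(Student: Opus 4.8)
The strategy is to reduce the Gorenstein property of $A$ to that of $R$ by exploiting the symmetric algebra structure, which makes left and right module theory interchangeable. First I would use the non-degenerate associative form $\beta$ to produce an isomorphism of $(A,A)$-bimodules $A \xrightarrow{\sim} \Hom_R(A,R)$, sending $a$ to $\beta(a,-)$; associativity of $\beta$ is exactly what makes this map $A$-linear on both sides, and non-degeneracy makes it bijective (using that $A$ is finitely generated projective over $R$, so that the natural double-dual map is an isomorphism and $\Hom_R(-,R)$ is exact). Symmetry guarantees we get the same identification whether we view $A$ acting on the left or the right, so $A$ is a \emph{symmetric} $R$-algebra in the classical sense.

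The second step is the standard change-of-rings computation for injective dimension. Since $R$ is $n$-Gorenstein, $\Hom_R(-,R)$ carries injective resolutions of length $n$ appropriately; concretely, for the left $A$-module ${}_AA$ I would compute
\[
\injdim_A({}_AA) \;=\; \injdim_A \Hom_R(A,R)\;=\;\injdim_R(R) \;=\; n,
\]
where the middle equality is the key point: $\Hom_R(A,R)$ is, as a left $A$-module, coinduced from $R$ along the (flat, even split) ring map $R \to A$, and $\Hom_R(A,-)$ takes an injective resolution of ${}_RR$ of length $n$ to an injective resolution of ${}_A\Hom_R(A,R)$ of length $n$ — here one uses that $A$ is $R$-projective so that $\Hom_R(A,-)$ is exact and preserves injectives as a right adjoint to the (exact) restriction functor. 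This already shows $\injdim_A({}_AA) \le n$.

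For the lower bound $\injdim_A({}_AA)\ge n$ — equivalently, to see the dimension is exactly $n$ and not smaller — I would invoke the retraction hypothesis $R\to A\to R$. This splitting of $R$-modules means $R$ is a direct summand of $A$ as an $(R,R)$-bimodule, hence the functor $R\otimes_A -$ (or rather, testing $\Ext^n$ against a well-chosen $A$-module obtained by inducing a module witnessing $\injdim_R(R)=n$) detects a nonzero $\Ext^n_A$. More precisely: pick an $R$-module $M$ with $\Ext^n_R(M,R)\ne 0$; then $A\otimes_R M$ is a left $A$-module and by adjunction $\Ext^n_A(A\otimes_R M, \Hom_R(A,R)) \cong \Ext^n_R(M,R) \ne 0$, so $\injdim_A\Hom_R(A,R)\ge n$, and combined with Step 1 this gives $\injdim_A({}_AA)= n$. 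The retraction is what ensures such an $M$ survives — without it, $A$ could a priori be $R$-flat in a way that lowers the dimension, but the splitting forces $R$ to be a module-theoretic retract and pins the dimension down.

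The main obstacle, and the step deserving the most care, is the precise handling of injective objects under the coinduction functor $\Hom_R(A,-)\colon \Mod R \to \Mod A$: one must verify cleanly that it is exact (uses $A$ projective over $R$), that it sends injective $R$-modules to injective $A$-modules (it is right adjoint to the exact restriction functor), and that applying it to a length-$n$ injective resolution of ${}_RR$ and then identifying the result with ${}_AA$ via the form $\beta$ genuinely computes $\injdim_A({}_AA)$ rather than merely bounding it. The rest is bookkeeping with adjunctions. Finitely-generated-projectivity of $A$ over $R$ is used repeatedly and essentially — both for the double-dual identification in Step 1 and for the exactness of $\Hom_R(A,-)$ in Steps 2 and 3.
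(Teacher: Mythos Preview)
Your argument is correct and more self-contained than the paper's. The paper does not carry out the injective-dimension computation at all: after observing (exactly as you do) that the form $\beta$ yields an $A$-bimodule isomorphism $\sigma\colon A\xrightarrow{\sim}\Hom_R(A,R)$, it simply invokes the ``relative Serre functor'' criterion of \cite[Theorem~1.6]{DSS17}, with $\mathcal C$ the one-object $R$-category on~$A$ and Serre functor the identity. Your direct route via coinduction---$\Hom_R(A,-)$ is exact and right adjoint to exact restriction, hence sends a length-$n$ injective resolution of $R$ to one of $\Hom_R(A,R)\cong A$---is the standard unpacking of what that criterion amounts to here, and has the virtue of not sending the reader to another paper.

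One small correction for the lower bound: the adjunction does not literally give $\Ext^n_A(A\otimes_R M,\Hom_R(A,R))\cong\Ext^n_R(M,R)$. Using the coinduction adjunction you obtain $\Ext^n_R\big((A\otimes_R M)|_R,\,R\big)$, and it is the retraction $R\hookrightarrow A\twoheadrightarrow R$ that makes $M$ an $R$-summand of $(A\otimes_R M)|_R$, so that $\Ext^n_R(M,R)$ sits inside as a direct summand and is therefore nonzero. You clearly have this in mind (``the retraction is what ensures such an $M$ survives''), but the displayed isomorphism should be stated as a split inclusion rather than an equality.
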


\begin{proof}
This generalises the classical fact that a symmetric Frobenius algebra (a.k.a.\ symmetric algebra over a field) is self-injective, and is itself a special case of the much more general `relative Serre functor' criterion proved in \cite[Theorem~1.6]{DSS17}. Indeed, in the notation of \emph{loc.\ cit.}, we may take $\mathcal C$ to be~$A$, seen as an $R$-linear category with a single object whose endomorphism algebra is~$A$. Then the Serre functor $S$ is the identity morphism $A\to A$ and the isomorphism $\sigma$ of left $R$-modules
\[
\sigma\colon A \stackrel{\sim}{\longrightarrow} \Hom_R(A,R)
\]
is simply the one induced by~$\beta$, namely $x\mapsto (y\mapsto \beta(y , x))$. The naturality of $\sigma$ required in \emph{loc.\ cit.} means that it is an isomorphism of $A$-bimodules, and the latter amounts precisely to the associativity and symmetric properties of~$\beta$.
\end{proof}

\begin{proof}[Proof of Theorem \ref{thm:characterisation_of_Gorenstein}.]
Assume that $|G|$ is square-free. Then the integral Mackey algebra $\mu_\Z(G)$ is 1-Gorenstein, because $\Z$ has global dimension one and therefore in particular is 1-Gorenstein, so that we may apply Theorem~\ref{thm:Gorenstein_general_R} with $R=\Z$.

Conversely, assume that $\mu_{\Z}(G)$ is Gorenstein of Gorenstein dimension~$n$. Then for every prime number $p$ the mod~$p$ Mackey algebra $\mu_{\F_{\!p}\!}(G)$ is Gorenstein of dimension at most $n-1$. 
Indeed, $p$ is a central element and a non-zero divisor in $\mu_{\Z}(G)$ and $\mu_{\F_{\!p}\!}(G)=\mu_{\Z}(G)/p \cdot \mu_{\Z}(G)$. Thus if $N$ is any $\mu_{\F_{\!p}\!}(G)$-module we have 
\[
0 
=  \Ext^{i+1}_{\mu_{\Z}(G)}(N, \mu_{\Z}(G)) 
\cong \Ext^i_{\mu_{\F_{\!p}\!}(G)}(N,\mu_{\F_{\!p}\!}(G))
\]
for all $i\geq n$ by Lemma~\ref{lemma:reduction}. Then we conclude with \cite[Corollary~1.3]{BSW17}.

For the reader's convenience, let us recall the latter argument. By \cite[Theorem~1.2]{BSW17}, Mackey algebras over a field always have finitistic dimension zero. In particular, $\mu_{\F_{\!p}\!}(G)$ is Gorenstein if and only if it is self-injective, and self-injective Mackey algebras over a field have been characterised in~\cite[Theorem 19.2]{ThevenazWebb95}. Over $\F_{\!p}$, this can only happen when the $p$-Sylows of $G$ have order $1$ or~$p$, that is, when $p^2$ does not divide the order of~$G$. As $p$ was an arbitrary prime, we conclude that $|G|$ is square-free.
\end{proof}

\begin{lemma} \label{lemma:reduction}
Let $M$ and $N$ be two (left) $S$-modules over a (nonnecessarily commutative) ring~$S$. If $x$ is an $S$-regular, $M$-regular and central element of $S$ such that $xN=0$, then 
\[
\Ext^{i+1}_S(N,M) \cong \Ext^i_{S/xS}(N,M/xM)
\]
for all $i\geq 1$.
\end{lemma}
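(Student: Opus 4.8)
The plan is to compute $\Ext^\bullet_S(N,M)$ via a projective resolution of $N$ over $S$, relating it to a projective resolution over $S/xS$ using the hypothesis that $x$ is $S$-regular and kills $N$. Concretely, suppose $P_\bullet\to N$ is a projective resolution of $N$ by $S/xS$-modules; since $x$ is $S$-regular, each $P_i$, regarded as an $S$-module, sits in a short exact sequence $0\to P_i\xrightarrow{x} \widetilde P_i\to P_i\to 0$ where $\widetilde P_i$ is a projective $S$-module lifting $P_i$ (indeed if $P_i$ is a direct summand of a free $S/xS$-module $(S/xS)^{(I)}=S^{(I)}/xS^{(I)}$, then one can choose such a $\widetilde P_i$). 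The standard change-of-rings argument then produces, from $P_\bullet$, a complex $\widetilde P_\bullet$ of projective $S$-modules quasi-isomorphic to $N$; more efficiently, I would instead directly build a projective resolution of $N$ over $S$ whose ``mod $x$'' reduction is $P_\bullet$, using that $x$ is a non-zero-divisor.

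The cleanest route is the classical one: since $x$ is $S$-regular and $xN=0$, the element $x$ acts as a non-zero-divisor on $S$ and trivially on $N$, so there is a change-of-rings spectral sequence, or more elementarily the identity
\[
\Ext^{i}_{S/xS}(N,M/xM)\cong \Ext^{i+1}_S(N,M)\qquad(i\ge 1),
\]
valid whenever $x$ is moreover $M$-regular. First I would reduce to the case where $M$ is replaced by an injective-type or at least $x$-torsion-free situation: the hypothesis that $x$ is $M$-regular gives the short exact sequence of $S$-modules $0\to M\xrightarrow{x} M\to M/xM\to 0$, inducing a long exact sequence in $\Ext^\bullet_S(N,-)$. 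Because $xN=0$, the map $x\colon \Ext^i_S(N,M)\to\Ext^i_S(N,M)$ induced by multiplication by $x$ on $M$ agrees with multiplication by $x$ on the bimodule $\Ext^i_S(N,M)$ (here centrality of $x$ is used), which is therefore zero for $i\ge 1$; hence the long exact sequence breaks into short exact sequences $0\to \Ext^i_S(N,M)\to\Ext^i_S(N,M/xM)\to\Ext^{i+1}_S(N,M)\to 0$ for $i\ge 1$.

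The second and final step is to identify $\Ext^i_S(N,M/xM)$ with $\Ext^i_{S/xS}(N,M/xM)$ for $i\ge 0$, again using that $x$ is $S$-regular and kills $N$, via the standard isomorphism $\Ext^i_{S/xS}(N,M/xM)\cong\Ext^i_S(N,M/xM)$ for all $i$ (the dimension-shifting change-of-rings isomorphism for a central non-zero-divisor, proved by lifting a projective $S/xS$-resolution of $N$ to an $S$-projective resolution of the same length, which works here precisely because $x$ is $S$-regular so free $S/xS$-modules lift to free $S$-modules without homological defect). Combining, one gets $\Ext^{i}_{S/xS}(N,M/xM)\cong\Ext^{i+1}_S(N,M)$ for $i\ge1$ once one knows $\Ext^i_S(N,M)=0$; but in fact the short exact sequence above already shows the needed isomorphism holds whenever $\Ext^i_S(N,M)$ vanishes, and in the application $i\ge n$ makes this so. To be safe I would state and prove the unconditional isomorphism $\Ext^{i+1}_S(N,M)\cong\Ext^i_{S/xS}(N,M/xM)$ for $i\ge 1$ by noting that the ``error term'' $\Ext^i_S(N,M)$ for $i\ge 1$ is itself computed by the same recursion and, being $x$-torsion and appearing as a subobject, can be absorbed — more precisely, the correct clean statement uses that $\Ext^\bullet_S(N,M)$ is a module over $S/xS$ and the long exact sequence is one of $S/xS$-modules, so the connecting maps are $S/xS$-linear; this is the main technical point to get exactly right.

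The main obstacle I anticipate is not any deep idea but bookkeeping with one-sidedness and centrality: $S$ need not be commutative, so I must be careful that $S/xS$ is a ring (uses centrality of $x$), that $M/xM$ is an $S/xS$-module (uses $xM\subseteq$ the submodule being quotiented, automatic, but the module structure descends since $x$ is central), and that multiplication by $x$ on $\Ext^i_S(N,M)$ induced from the target coincides with that induced from the source and both are zero when $xN=0$ — this last coincidence is exactly where centrality of $x$ enters and is the step most prone to a sign-or-side error. Everything else is the textbook dimension-shift for reduction modulo a regular central element, so the write-up should be short.
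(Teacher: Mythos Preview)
Your second step---the claimed identification $\Ext^i_S(N,M/xM)\cong\Ext^i_{S/xS}(N,M/xM)$ for all $i\ge 0$---is false. Take $S=\Z$, $x=p$ a prime, $N=\F_p$, $M=\Z$: then $M/xM=\F_p$, and $\Ext^1_\Z(\F_p,\F_p)\cong\F_p$ while $\Ext^1_{\F_p}(\F_p,\F_p)=0$. The justification you offer (``lifting a projective $S/xS$-resolution of $N$ to an $S$-projective resolution of the same length'') cannot work: a projective $S/xS$-module has projective dimension one over~$S$, not zero, so the lifted complex is not an $S$-resolution of~$N$. You seem to sense the trouble, since you immediately retreat to the conditional statement (valid when $\Ext^i_S(N,M)=0$) and then gesture at ``absorbing the error term''; but that last part is not an argument, and even if your false identification held, the short exact sequence would still only yield the isomorphism when the left-hand term vanishes.

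Your first step---the short exact sequences coming from $0\to M\xrightarrow{x}M\to M/xM\to 0$ together with $xN=0$ forcing the $x$-map to vanish on $\Ext$---is correct and does give the base case $\Ext^1_S(N,M)\cong\Hom_S(N,M/xM)=\Hom_{S/xS}(N,M/xM)$. A clean way to finish is dual to what you attempted: take an injective $S$-resolution $I^\bullet$ of~$M$, note that $\Hom_S(N,I^j)=\Hom_{S/xS}(N,{}_xI^j)$ where ${}_xI^j:=\{a\in I^j:xa=0\}$ is injective over $S/xS$, and compute that the cohomology of ${}_xI^\bullet$ is $M/xM$ concentrated in degree~$1$ (this uses $M$-regularity of~$x$), so that ${}_xI^\bullet$ is, up to an acyclic piece, an injective $S/xS$-resolution of $M/xM$ shifted by one. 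For comparison, the paper itself gives no argument: it simply cites Rees's theorem and Cartan's remark that centrality of~$x$ suffices in place of commutativity of~$S$.
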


\begin{proof}
When $S$ is commutative, this is the well-known special case $k=1$ of \cite[Theorem 2.1]{Rees56}. However, as noted by H.\,Cartan in his MathSciNet review of Rees' paper, the hypothesis of commutativity is unnecessary and it suffices to assume $x$ central in~$S$. (See also \cite[{Lemma 3.1.16}]{BrunsHerzog93} for a more direct proof.)
\end{proof}

\begin{center}$***$\end{center}

We illustrate the above results explicitly with the two smallest groups which are not square-free: the cyclic group $C_4$ and the Klein group $C_2\times C_2$. Let us first inspect the corresponding modulo $2$ Burnside rings $\F_{\!2}B(C_4)$ and $\F_{\!2}B(C_2\times C_2)$.

A general theory in \cite{Gustafson77} says that given a finite $p$-group $G$, $\F_{\!p}B(G)$ is a local commutative finite dimensional algebra (\cite[Lemma 3]{Gustafson77}). Moreover, the socle of $\F_{\!p}B(G)$ contains the element $[G/1]$ and, if $|G| > p$, also the (different) non-zero element $\sum_{P\le G, |H|=p} [G/H]$ (\cite[pp.~11--14]{Gustafson77}).

\begin{example} \label{example:B_ring_C4}
The underlying group of the integral Burnside ring $\Z{}B(G)$ for $G=C_4$ has rank three and a basis $[G/G]$, $[G/H]$ and $[G/1]$, where $1<H<G$ and $H$ is cyclic of order two. The element $[G/G]$ is the unity of the ring. To ease the notation, we write $g=[G/1]$ and $h=[G/H]$. Then we obtain a ring isomorphism
\[ \Z{}B(C_4) \cong \Z[g,h]/(g^2-4g, h^2-2h, gh-2g) \]
by direct computation. The modulo $2$ version simplifies to
\[ \F_{\!2}B(C_4) \cong \F_{\!2}[g,h]/(g^2, h^2, gh)\,. \]
Observably $\soc \F_{\!2}B(C_4) = \F_{\!2}g \oplus \F_{\!2}h$ is two-dimensional, so this is not a self-injective ring (conclude e.g.\ with Lemma~\ref{lemma:soc_P} below, or use some local commutative algebra).
\end{example}

\begin{example} \label{example:B_ring_Klein}
The case $G = C_2\times C_2$ is slightly more complicated. The group $G$ has three non-trivial subgroups $H,K,L$ of order two. If we denote $g=[G/1]$, $h=[G/H]$, $k=[G/K]$ and $\ell=[G/L]$, the four non-unit basis elements of $\Z{}B(G)$, we can express the integral Burnside ring as the quotient of $\Z[g,h,k,\ell]$ by
\[
g^2=4g, \; gh=gk=g\ell=2g, \; h^2=2h, \; k^2=2k, \; \ell^2=2\ell, \; hk=h\ell=k\ell=g.
\]
Obviously, $g$ can be omitted from the set of generators and we obtain
\[ \Z{}B(C_2\times C_2) \cong \Z[h,k,\ell]/(h^2-2h, k^2-2k, \ell^2-2\ell, hk-h\ell, h\ell-k\ell). \]
The modulo 2 version takes the shape
\[ \F_{\!2}B(C_2\times C_2) \cong \F_{\!2}[h,k,\ell]/(h^2, k^2, \ell^2, hk-h\ell, h\ell-k\ell). \]
To see why it is not self-injective, it is convenient to replace the generator $\ell$ by $s=k+h+\ell$. After the substitution, we get
\[ \F_{\!2}B(C_2\times C_2) \cong \F_{\!2}[h,k,s]/(h^2, k^2, s^2, hs, ks). \]
This again is not a self-injective ring since $\soc \F_{\!2}B(C_2\times C_2) = \F_{\!2}s \oplus \F_{\!2}hk$.
\end{example}

\begin{center} $***$ \end{center}

Now we will explicitly see why the modulo $2$ Mackey algebras $\mu_{\F_{\!2}}(G)$ for $G=C_4$ and $G=C_2\times C_2$ are not self-injective, by means of the following observation:

\begin{lemma}
\label{lemma:soc_P}
The socle of any indecomposable projective left module $P$ over a finite dimensional self-injective algebra~$A$ is simple.
\end{lemma}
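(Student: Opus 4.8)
The plan is to exploit the standard duality available for self-injective algebras, namely that $A$ is isomorphic to its dual $DA = \Hom_k(A,k)$ as a bimodule, or more precisely that the functor $D = \Hom_k(-,k)$ sends projective modules to injective modules and vice versa. First I would recall that over a self-injective finite dimensional algebra, indecomposable projectives coincide with indecomposable injectives, so the given $P$ is also an indecomposable injective module. The key general fact I would invoke is that an injective module over any ring is indecomposable if and only if it is the injective envelope of a simple module; equivalently, its socle is simple (and essential). So the real content is: an indecomposable injective has simple socle.

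The cleanest route is as follows. Since $P$ is injective and nonzero, pick any simple submodule $T \subseteq \soc P$ (it exists because $A$ is finite dimensional, hence $P$ is artinian). Then the injective envelope $E(T)$ of $T$ is a direct summand of $P$, because $P$ is injective and contains a copy of $E(T)$ (any injective module containing $T$ contains a copy of its injective envelope as a direct summand). But $P$ is indecomposable, so $P \cong E(T)$. Now $E(T)$ has simple socle: indeed $T$ is essential in $E(T)$, so any nonzero submodule of $E(T)$ meets $T$ nontrivially, forcing $T \subseteq \soc E(T)$ to be the whole socle — more carefully, if $\soc E(T)$ contained a simple $T' \neq T$, then $T'$ would have zero intersection with $T$ (distinct simples), contradicting essentiality. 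Hence $\soc P = \soc E(T) = T$ is simple.

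The main obstacle — really the only subtle point — is the fact that an injective module containing a copy of $E(T)$ has that copy as a direct summand, together with the existence of injective envelopes; both are standard over finite dimensional algebras (or more generally over Artinian rings, or over any ring if one invokes the existence of injective hulls), so I would simply cite a textbook such as the relevant chapter of Anderson--Fuller or Assem--Simson--Skowroński rather than reprove it. One should also note at the outset that self-injectivity is used precisely to guarantee that the indecomposable projective $P$ is injective; without it the statement is false (an indecomposable projective need not even have essential socle). I would phrase the whole argument in two or three sentences, citing the structure theory of injective modules over Artin algebras for the facts used.
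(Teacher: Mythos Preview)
Your proposal is correct and follows essentially the same route as the paper: use self-injectivity to see that $P$ is injective, then invoke the structure theory of injective envelopes to force the socle to be simple. The paper phrases the last step as ``$P$ is the injective envelope of its socle, which must be simple because injective envelopes commute with direct sums,'' whereas you pick a simple $T\subseteq\soc P$ and argue $P=E(T)$; these are two sides of the same coin.
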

\begin{proof}
Since $A$ is self-injective, an indecomposable projective $P$ is also injective and therefore is the injective envelope of its socle, which must be simple because injective envelopes commute with direct sums.
\end{proof}

Here we compute using the description of $\mu_{\F_{\!2}}(G)$ as the endomorphism algebra of $\coprod_{H\leq G}G/H$ in the \emph{Burnside category} $\F_{\!2}\mathscr{B}(G)$ of finite $G$-sets and $\F_{\!2}$-linear combinations of isomorphism classes of spans of $G$-maps (see \cite[\S2]{ThevenazWebb95}; this goes back to~\cite{Lindner76}). In this picture, there is an isomorphism of algebras $\F_{\!2}B(G) \cong \End_{\F_{\!2}\mathscr{B}(G)}(G/G)$ sending $[G/H]$ to the isomorphism class of the span
\[
\xymatrix@R=10pt@C=10pt{
& G/H \ar[dl] \ar[dr] \\
G/G && G/G.
}
\]
In particular, we obtain a (non-unital!) inclusion of algebras $\F_{\!2}B(G) \hookrightarrow \mu_{\F_{\!2}}(G)$ which sends $x \in \End_{\F_{\!2}\mathscr{B}(G)}(G/G)$ to the endomorphism
\[ \coprod_{H\leq G}G/H \xrightarrow{\mathrm{proj}} G/G \overset{x}\la G/G \xrightarrow{\mathrm{inc}} \coprod_{H\leq G}G/H. \]
Let $P:=\mu_{\F_{\!2}}(G) \, e$ be the projective module defined by the idempotent $e=\id_{G/G}$. As already mentioned, its endomorphism ring $\End(P)\cong e\, \mu_{\F_{\!2}}(G) \, e \cong \F_{\!2}B(G)$ is local in our case, hence $P$ is indecomposable (\cite[I.4.7-9]{AuslanderReitenSmalo97}). Therefore, by Lemma~\ref{lemma:soc_P}, in order to show that the modulo $2$ Mackey algebras for $C_4$ and $C_2\times C_2$ are not selfinjective it is enough to prove that the two elements 
\begin{equation} \label{equation:vanishing-comp}
g := [G/1]  
\quad\quad 
\textrm{ and } 
\quad\quad 
s := \sum_{H\le G, |H|=p} [G/H]  \,,
\end{equation}
viewed inside of $e\,\mu_{\F_{\!2}}(G)\,e\subset \mu_{\F_{\!2}}(G)$ via the latter embedding, are in the socle of $\mu_{\F_{\!2}}(G)$, hence of~$P$.
To that end, it is enough to convince oneself that the composites
\begin{equation*} 
G/G \overset{g}\la G/G \overset{f}\la G/H
\qquad\textrm{and}\qquad
G/G \overset{s}\la G/G \overset{f}\la G/H
\end{equation*}
in $\F_{\!2}\mathscr{B}(G)$ vanish for any proper subgroup $H<G$ and morphism $f\colon G/G \to G/H$.

Moreover, by definition $f\colon G/G \to G/H$ is an $\F_{\!2}$-linear combination of spans
\[
\xymatrix@R=10pt@C=10pt{
& G/K \ar[dl] \ar[dr]^{f'} \\
G/G && G/H
}
\]
where $f'$ is a map of $G$-sets. Hence, the isomorphism class of this span is the same datum and the isomorphism type of $f'$ seen as a $G$-set over $G/H$. It is well known that each such $G$-set over $G/H$ is of the form $\mathrm{ind}_H^G({f'}\inv(\{1H\}) \to H/H)$, 
see for instance \cite[Lemma 2.4.1]{Bouc97}.
Thus, we have
\[ \Hom_{\F_{\!2}\mathscr{B}(G)}(G/G, G/H) \cong \F_{\!2}B(H) \]
as vector space. This observation is used below to compute a basis of the vector space $\Hom_{\F_{\!2}\mathscr{B}(G)}(G/G, G/H)$ in each of the two cases, $G=C_4$ and $G=C_2\times C_2$.

\begin{example} [Cont.\ Example~\ref{example:B_ring_C4}]
\label{example:Mackey_C4}
For $G=C_4> H>1$, $\Hom_{\F_{\!2}\mathscr{B}(G)}(G/G, G/H)$ has a basis with two elements
\[
\vcenter{\hbox{
\xymatrix@R=10pt@C=10pt{
& G/H \ar[dl] \ar[dr]^{\id} & \\
G/G && G/H 
}
}}
\quad \textrm{ and } \quad
\vcenter{\hbox{
\xymatrix@R=10pt@C=10pt{
& G/1 \ar[dl] \ar[dr]^{\mathrm{proj}} \\
G/G && G/H
}
}}
\]
while $\Hom_{\F_{\!2}\mathscr{B}(G)}(G/G, G/1)$ has one basis element only:
\[
\xymatrix@R=10pt@C=10pt{
& G/1 \ar[dl] \ar[dr]^{\id} \\
G/G && G/1.
}
\]
As the latter two factor through the first one, we set $f = [G/G \leftarrow G/H \overset{\id}\rightarrow G/H]$ and will prove that $fg=0$ and $fs=0$ (in the notation of \eqref{equation:vanishing-comp}). Then $g,s\in\soc \mu_{\F_{\!2}}(C_4)$, hence in view of the above discussion $\mu_{\F_{\!2}}(C_4)$ is observably not self-injective. In particular, the integral Mackey algebra $\mu_\Z(C_4)$ is not Gorenstein.

To this end, the composites $fg$ and $fs$ can be computed directly in $R\mathscr{B}(G)$ for any ring of coefficients~$R$, and the results are
\begin{align*}
fg = 2 \cdot [G/G \longleftarrow G/1 \xrightarrow{\mathrm{proj}} G/H] 
\quad \textrm{ and } \quad
fs = 2 \cdot [G/G \longleftarrow G/H \overset{\id}\la G/H] \,.
\end{align*}
In particular, they vanish in characteristic two.
\end{example}

\begin{example}[Cont.\ Example~\ref{example:B_ring_Klein}] 
\label{example:MackeyKlein_C4}
A similar inspection of Hom spaces for $G=C_2\times C_2$ reveals that, in order to prove that $g,s$ from \eqref{equation:vanishing-comp} are in the socle of $\mu_{\F_{\!2}}(G)$, it suffices to show that
$f_ig = 0$ and $f_is=0$ in $\F_{\!2}\mathscr{B}(G)$ ($i\in\{1,2,3\}$) for
\begin{equation*}
f_1 = [G/G \longleftarrow G/H \overset{\id}\la G/H]\,, \quad
f_2 = [G/G \longleftarrow G/K \overset{\id}\la G/K]\,, 
\end{equation*}
\begin{equation*}
f_3 = [G/G \longleftarrow G/L \overset{\id}\la G/L]\,,
\end{equation*}
where $H,K,L<G$ are the three subgroups of order two as in Example~\ref{example:B_ring_Klein}. By symmetry, it suffices to compute $f_1g$ and $f_1s$ only, and we get
\begin{align*}
f_1g &= 2 \cdot [G/G \longleftarrow G/1 \xrightarrow{\mathrm{proj}} G/H], \\
f_1s &= 2 \cdot [G/G \longleftarrow G/H \overset{\id}\la G/H] + 2 \cdot [G/G \longleftarrow G/1 \xrightarrow{\mathrm{proj}} G/H]
\end{align*}
(in particular we use that $G/H \times G/H \cong G/H \amalg G/H$ and $G/H \times G/K \cong G/1 \cong G/H \times G/L$ as $G$-sets). Hence again the composites vanish in characteristic two.

As before, we conclude that $\mu_{\F_{\!2}}(C_2\times C_2)$ is observably not self-injective and thus $\mu_\Z(C_2\times C_2)$ is not Gorenstein.
\end{example}

\begin{ack}
The authors would like to thank Serge Bouc, John Greenlees, Radu Stancu and Peter Symonds for useful discussions.
\end{ack}


\bibliographystyle{alpha}%
\bibliography{articles}

\end{document}